\numberwithin{equation}{section}
\theoremstyle{theorem}
\newtheorem{theorem}{Theorem}[section]
\newtheorem{corollary}[theorem]{Corollary}
\newtheorem{lemma}[theorem]{Lemma}
\newtheorem{prop}[theorem]{Proposition}
\theoremstyle{definition}
\newtheorem{definition}[theorem]{Definition}
\newtheorem{remark}[theorem]{Remark}
\newcommand{\D}{\mathbb{D}}
\newcommand{\C}{\mathbb{C}}
\newcommand{\T}{\mathbb{T}}
\newcommand{\R}{\mathbb{R}}
\title{Testing von Neumann inequalities with nilpotent matrices}
\author{Greg Knese}
\address{Department of Mathematics, Washington University in St Louis, St Louis, MO 63130, USA}
\email{geknese@wustl.edu}
\thanks{Partially supported by NSF grant DMS-2247702}
\keywords{Schur class, Schur-Agler class, Agler class, nilpotent matrix,
commuting operators tuples, von Neumann's inequality, multivariable operator theory,
Pick interpolation, Carath\'eodory-Fej\'er interpolation}
\subjclass[2020]{47A48, 47A13, 32A35, 32A38}
\date{\today}
\begin{document}

\maketitle

\begin{abstract}
We give an elementary proof that
the Agler norm of a function is determined by its norm on
commuting tuples of nilpotent matrices.  The proof is a
variation on a standard cone separation argument.  The topic is closely
related to the Eschmeier-Patton-Putinar formulation
of Carath\'eodory-Fej\'er interpolation.
\end{abstract}

\section{Introduction}
In 1951, von Neumann proved that for a polynomial $p\in \C[z]$
and any contractive operator $T$ ($\|T\|\leq 1$) on a Hilbert space, we have
\[
\|p(T)\| \leq \sup_{z\in \D} |p(z)|
\]
where $\D$ is the unit disk in $\C$ \cite{vN}.
And\^{o}'s dilation theorem established the analogous 
inequality in two variables \cite{Ando}.  Namely, for a pair of commuting 
contractive operators $T=(T_1,T_2)$ and $p \in \C[z_1,z_2]$ 
we have
\[
\| p(T_1,T_2)\| \leq \sup_{|z_1|,|z_2|\leq 1} |p(z_1,z_2)|.
\]
Varopoulos first established that the inequality does not
generalize to three or more variables \cite{varo}; the appendix
of \cite{varo} contained an explicit counterexample due
to Kaijser and Varopoulos. 
On the other hand, knowing that a function satisfies a
von Neumann inequality implies a variety of useful properties
(such as a contractive transfer function realization formula); 
see \cite{Agler}, \cite{AMpick}, \cite{AMbook}, \cite{BTpick}.
Thus, it is of interest to compute the \emph{Agler norm}
of $p\in \C[z_1,\dots, z_d]$,
\[
\|p\|_A = \sup_{T} \|p(T)\|
\]
where the supremum is taken over all $d$-tuples
$T =(T_1,\dots, T_d)$ of commuting contractive operators
on a Hilbert space.

It would be desirable to compute the above norm over a smaller set
of operators and preferably over \emph{matrices} (our term of art for operators
operating on finite dimensional Hilbert spaces).
It is known that one can compute $\|p\|_A$ by taking
the supremum over $d$-tuples of simultaneously diagonalizable 
commuting contractive matrices with joint eigenspaces each of dimension $1$.
Something more general is proven in \cite{polyhedraAMY}, Theorem 6.1---we discuss
this further in Section \ref{sec:comments}.
It is also worth pointing out that many of the counterexamples
to the von Neumann inequality in 3 or more variables use
nilpotent matrices (and sometimes nilpotent partial isometries).
This is the case in the original paper of Varopoulos \cite{varo} as
well as Crabb-Davie \cite{crabb}, Dixon \cite{dixon}, and Holbrook's optimal (as far as dimension)
example \cite{holbrook}.

Recently, B.\ Cole proved that von Neumann's inequality can be tested
using $d$-tuples of commuting contractive nilpotent matrices. 
The proof employed ideas from $C^*$-algebras and
harmonic analysis.  
M. Hartz has also privately indicated another proof of this.
The next result can thus be considered part of the folklore---so not due to
the present author---but also not published anywhere.  
It should also be pointed out that related concepts have been 
studied in the setting of nc *-polynomials/continuous functions of single operators;
see Courtney-Sherman \cite{Sherman}.  

\begin{theorem} \label{colethm}
Suppose $p \in \C[z_1,\dots,z_d]$.  Then, 
\[
\|p(T)\| \leq 1
\]
for every $d$-tuple of commuting contractive operators
$T=(T_1,\dots, T_d)$ on a Hilbert space if and only
if for every $d$-tuple of commuting contractive nilpotent
matrices $M=(M_1,\dots, M_d)$ we have
\[
\|p(M)\| \leq 1.
\]
\end{theorem}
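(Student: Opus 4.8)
The forward implication is immediate: a $d$-tuple of commuting contractive nilpotent matrices is, in particular, a $d$-tuple of commuting contractive operators on a finite dimensional Hilbert space. So the content is the reverse implication; I argue the contrapositive, assuming $\|p\|_A>1$ and producing one $d$-tuple $M$ of commuting contractive nilpotent matrices with $\|p(M)\|>1$.

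Fix an integer $n>\deg p$ and set $A_n=\C[z_1,\dots,z_d]/(z_1,\dots,z_d)^n$, a finite dimensional commutative algebra on which multiplication by each $z_j$ acts as a nilpotent operator $Z_j$, the $Z_j$ commuting and satisfying $Z^\alpha=0$ for $|\alpha|\ge n$. Let $V_n$ be the finite dimensional real space of self-adjoint hereditary polynomials $q(z,\bar w)=\sum_{|\alpha|,|\beta|<n}q_{\alpha\beta}z^\alpha\bar w^\beta$, and let $\mathcal C_n\subseteq V_n$ be the closed convex cone generated by the squares $h(z)\overline{h(w)}$ together with the elements $h(z)\overline{h(w)}-(Z_jh)(z)\,\overline{(Z_jh)(w)}$, where $h$ runs over $A_n$ and $1\le j\le d$. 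The essential point is that $Z_jh=z_jh$ is computed \emph{in $A_n$}: the honest expression $z_jh(z)\,\overline{z_jh(w)}$ would have bidegree $(n,n)$ when $\deg h=n-1$ and would not lie in $V_n$, and it is precisely the reduced version that makes the construction below work. The first step is the cone separation: suppose $1-\overline{p(w)}p(z)\notin\mathcal C_n$ (which holds for some $n$ by the last step below). By the separation theorem there is a self-adjoint linear functional $\ell$ on $V_n$, nonnegative on $\mathcal C_n$ and with $\ell\bigl(1-\overline{p(w)}p(z)\bigr)<0$. Nonnegativity on the squares makes $\langle h_1,h_2\rangle:=\ell\bigl(h_1(z)\overline{h_2(w)}\bigr)$ a positive semidefinite form on $A_n$; nonnegativity on the reduced generators gives $\langle Z_jh,Z_jh\rangle\le\langle h,h\rangle$, so the radical $K$ of the form is an \emph{ideal} of $A_n$, and the $Z_j$ descend to commuting operators $M_j$ on the finite dimensional Hilbert space $A_n/K$ which are contractions (same inequality) and nilpotent ($A_n$ is). With $\eta$ the image of $1\in A_n$ one computes $\langle(I-p(M)^*p(M))\eta,\eta\rangle=\ell\bigl(1-\overline{p(w)}p(z)\bigr)<0$, and $\eta\ne0$ (else that quantity would be $0$), so $\|p(M)\|>1$. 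Conversely any commuting contractive nilpotent tuple $M$ with $M^\alpha=0$ for $|\alpha|\ge n$, together with any vector $\eta$, gives a functional $q\mapsto\langle q(M,M^*)\eta,\eta\rangle$ nonnegative on $\mathcal C_n$ (using $(Z_jh)(M)=M_jh(M)$), so in fact $1-\overline{p(w)}p(z)\in\mathcal C_n$ iff von Neumann's inequality for $p$ holds on all such $M$; this is the Eschmeier--Patton--Putinar formulation of Carath\'eodory--Fej\'er interpolation.

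It remains to show that if $1-\overline{p(w)}p(z)\in\mathcal C_n$ for every $n>\deg p$, then $\|p\|_A\le1$ (the contrapositive being what was used above). Each membership gives positive hereditary polynomials $\Gamma_0^{(n)},\dots,\Gamma_d^{(n)}$ of bidegree $<(n,n)$ with
\[
1-\overline{p(w)}p(z)=\Gamma_0^{(n)}(z,\bar w)+\sum_{j=1}^d\Bigl(\Gamma_j^{(n)}(z,\bar w)-\widehat{\Gamma_j^{(n)}}(z,\bar w)\Bigr),
\]
where, writing $\Gamma_j^{(n)}=\sum_\nu\phi_{j\nu}(z)\overline{\phi_{j\nu}(w)}$, one sets $\widehat{\Gamma_j^{(n)}}:=\sum_\nu(Z_j\phi_{j\nu})(z)\,\overline{(Z_j\phi_{j\nu})(w)}$. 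Evaluating this identity on the truncated Fock shifts (the $Z_j$ on $A_n$ with the monomials orthonormal), scaled by $r<1$, at the vector $1$, and using that each such shift has norm $\le r$, one obtains
\[
(1-r^2)\sum_{j=1}^{d}\sum_{\gamma}r^{2|\gamma|}\bigl(\Gamma_j^{(n)}\bigr)_{\gamma\gamma}\le1,\qquad\sum_{\gamma}r^{2|\gamma|}\bigl(\Gamma_0^{(n)}\bigr)_{\gamma\gamma}\le1,
\]
for all $r<1$. Since each $\Gamma_j^{(n)}$ is positive, this bounds every coefficient of $\Gamma_j^{(n)}$ of a fixed degree uniformly in $n$, so a diagonal extraction produces positive kernels $\Gamma_0,\dots,\Gamma_d$ on the polydisc with $1-\overline{p(w)}p(z)=\Gamma_0(z,\bar w)+\sum_{j=1}^d(1-\overline{w_j}z_j)\Gamma_j(z,\bar w)$ --- the operation $\Gamma_j^{(n)}\mapsto\widehat{\Gamma_j^{(n)}}$ becomes honest multiplication by $\overline{w_j}z_j$ in the limit, since the $A_n$-truncation only affects degrees $\ge n\to\infty$. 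From such an Agler decomposition the standard ``lurking isometry'' argument produces a contractive transfer-function realization of $p$, hence $\|p(T)\|\le1$ for every commuting contractive tuple $T$, i.e.\ $\|p\|_A\le1$. Assembling: if $\|p\|_A>1$, this forces $1-\overline{p(w)}p(z)\notin\mathcal C_n$ for some $n$, and the separation step then yields the desired nilpotent matrix tuple $M$ with $\|p(M)\|>1$.

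The main obstacle is this passage to the limit: obtaining the uniform bounds and verifying that the limiting kernels give a genuine Agler decomposition --- and, $p$ being a polynomial, that this decomposition may be taken with polynomial kernels of bounded degree, so the realization step is truly elementary. The other delicacy --- that truncating at degree $n$ seems to destroy the module structure needed for the model operators --- is exactly what the reduced cone generators $h(z)\overline{h(w)}-(Z_jh)(z)\overline{(Z_jh)(w)}$ circumvent. Finally, one should work with closures of the cones throughout and rescale $p$ at the outset so that all the relevant inequalities read ``$\le1$''.
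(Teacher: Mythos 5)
Your overall architecture is the same as the paper's: a finite-dimensional cone separation argument in which the separating functional is turned, by a GNS-type construction, into a commuting contractive nilpotent tuple violating the inequality, followed by a limiting argument over the truncation degree $n$ to recover a genuine Agler-type decomposition. Your GNS step is a sound repackaging of the paper's construction of a simple $N$-nilpotent tuple from the factorization $B_{\alpha,\beta}=\vec{b}_\alpha^{\,*}\vec{b}_\beta$ of the separating matrix (your observation that the radical is an ideal plays the role of the paper's well-definedness of $T_j\vec{b}_\alpha=\vec{b}_{\alpha+e_j}$). Your second half, however, is genuinely different: instead of routing through the Eschmeier--Patton--Putinar theorem to get a rational inner interpolant for each truncation and then a normal-families limit as in the paper's proofs of Theorems \ref{nilthm} and \ref{genthm}, you extract uniform coefficient bounds on the truncated decompositions by evaluating on weighted truncated shifts and pass directly to a limiting decomposition, with a slack positive kernel $\Gamma_0$, for $p$ itself. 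That is legitimate and arguably more self-contained, at the cost of having to redo a lurking-isometry/hereditary-calculus argument in the presence of the slack term and a final $r\to 1$ passage from strict to non-strict contractions.

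The genuine gap sits exactly where the paper needs Lemma \ref{Cclosed}. You define $\mathcal{C}_n$ as the \emph{closed} convex cone generated by the squares and the reduced differences --- which is what makes the separation step legitimate --- but in your last step you assert that membership of $1-\overline{p(w)}p(z)$ in $\mathcal{C}_n$ ``gives positive hereditary polynomials $\Gamma_0^{(n)},\dots,\Gamma_d^{(n)}$'' satisfying the exact identity. Membership in a closure only gives a sequence of exact decompositions of elements converging to $1-\overline{p(w)}p(z)$; the claim that a limit point of the algebraic cone again admits an exact decomposition is precisely the nontrivial closedness assertion, which the paper proves via the shift/telescoping bound $a_1 n I\le\sum_{j}A^j(k)\le a_2 n I$. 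As written, your two halves are therefore inconsistent: you use the closure for the separation and exact representatives for the limit. The gap is repairable with tools you already have --- either run the weighted-shift evaluation on the approximating exact decompositions (their values converge, so the same uniform coefficient bounds hold for large $k$, and the diagonal extraction can be performed jointly in $n$ and $k$), or use those same bounds to prove the algebraic cone is already closed, recovering the paper's Lemma \ref{Cclosed} --- but one of these arguments must actually be supplied, since the step as stated does not follow.
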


In this article we give a different proof of this theorem.
Our proof involves combining the $d$-variable 
Carath\'eodory-Fej\'er coefficient interpolation theorem
of Eschmeier-Patton-Putinar \cite{EPP}
with a cone separation argument.
Part of the argument is that we can simplify things
further and restrict our attention to a special
class of commuting nilpotent matrices.

\begin{definition} \label{simplenil}
Let $T= (T_1,\dots, T_d)$ be a $d$-tuple of commuting $n\times n$ matrices.
We say $T$ is a \emph{simple $N$-nilpotent $d$-tuple} if $\C^n$
has a spanning set indexed in the following way
\begin{equation} \label{spanset}
\{\vec{b}_{\alpha} : \alpha=(\alpha_1,\dots, \alpha_d) \in \mathbb{N}_0^d, 
|\alpha| = \sum_{j=1}^{d} \alpha_j \leq N \} \subset \C^n
\end{equation}
such that
\begin{equation} \label{Tdef}
T_j \vec{b}_{\alpha} = \begin{cases} \vec{b}_{\alpha+e_j} & \text{ if } |\alpha| \leq N-1 \\
0 & \text{ if } |\alpha| = N. \end{cases}
\end{equation}
where $e_j$ is the vector with all zeros except a $1$ in the $j$-th position. $\diamond$
\end{definition}

Necessarily we will have for $\beta \in \mathbb{N}_0^d$
\[
T^{\beta} \vec{b}_{\alpha} = \begin{cases} \vec{b}_{\alpha+\beta} & \text{ if } |\alpha+\beta| \leq N \\
0 & \text{ if } |\alpha+\beta| > N. \end{cases}
\]
This definition gives an abstract formulation of a certain type of commuting
$d$-tuple of nilpotent
operators.  The recipe above yields a
well-defined $d$-tuple of commuting
nilpotents if the $\vec{b}_{\alpha}$ vectors form
a basis (and not just a spanning set), but 
it will be convenient to allow 
degeneracies.  To build all of the simple $N$-nilpotent
$d$-tuples would be non-trivial because in the case where
the $\vec{b}_{\alpha}$'s are not linearly independent one would
need to check that \eqref{Tdef} is compatible with the relations
between the spanning vectors.

Also, the above setup is equivalent to having an inner product on 
a quotient $\C[z_1,\dots, z_d]/I$ where $I$ is an ideal
containing the ideal $(z)^{N+1}$ of polynomials vanishing
to order at least $N+1$.  In this formulation, $T_j$ is multiplication
by $z_j$ and in what follows we will be interested in the situation
where the operators $T_1,\dots, T_d$ are contractive.

We will establish Theorem \ref{colethm} via the following series of
equivalences.  The main novelty in this paper is $(1) \implies (2)$.

\begin{theorem} \label{nilthm}
Let $p \in \C[z_1,\dots, z_d]$ have total degree at most $N$.
Write $p(z) = \sum_{|\alpha| \leq N} p_{\alpha} z^{\alpha}$.
The following are equivalent:
\begin{enumerate}
\item 
For every simple $N$-nilpotent $d$-tuple of contractions $T$
we have $\|p(T)\|\leq 1$.
\item 
There exist positive semi-definite matrices $A^1,\dots, A^d$
such that
\[
\delta_{0,\alpha,\beta} - \overline{p_{\alpha}} p_{\beta}
=
\sum_{j=1}^{d} (A^j_{\alpha,\beta} - A^j_{\alpha-e_j, \beta-e_j})
\]
where $A^j_{\alpha,\beta} = 0$ if an index is out of bounds
and $\delta_{0,\alpha,\beta} = 1$ exactly when $\alpha=\beta=0$ and
$0$ otherwise.

\item 
There exist vector
polynomials $A^1(z),\dots, A^d(z)$ of degree at most $N$ such that
\[
1-|p(z)|^2 = \sum_{j=1}^{d} (1-|z_j|^2) |A^j(z)|^2 \text{ mod } ((z)^{N+1},(\bar{z})^{N+1}).
\]
\item 
There exists a rational inner function 
$\phi:\D^d\to \D$ such that 
\[
p(z) - \phi(z) \text{ vanishes to order at least } N+1
\]
and such that there exist positive semi-definite kernels $K_1,\dots, K_d:\D^d\times \D^d \to \C$
so that
\begin{equation} \label{agdecomp}
1-\overline{\phi(w)}\phi(z) = \sum_{j=1}^{d} (1-\bar{w}_j z_j)K_j(z,w).
\end{equation}
\end{enumerate}
\end{theorem}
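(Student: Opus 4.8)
The plan is to first translate condition (1) into a statement about a convex cone of Hermitian matrices, prove $(1)\Leftrightarrow(2)$ by a cone separation (bipolar) argument, and then dispatch $(2)\Leftrightarrow(3)$ by bookkeeping and $(3)\Leftrightarrow(4)$ using the Eschmeier--Patton--Putinar theorem. Throughout, set $\Lambda=\{\alpha\in\mathbb{N}_0^d:|\alpha|\le N\}$, let $S_1,\dots,S_d$ be the commuting ``truncated shifts'' on $\C^{\Lambda}$ defined by $S_j\vec{e}_\alpha=\vec{e}_{\alpha+e_j}$ when $|\alpha|\le N-1$ and $S_j\vec{e}_\alpha=0$ when $|\alpha|=N$ (so $S^\beta=0$ whenever $|\beta|\ge N+1$), let $E_{00}$ be the matrix with a single $1$ in the $(0,0)$ slot, and write $\langle X,Y\rangle=\operatorname{tr}(XY)$ for the trace pairing on Hermitian $\Lambda\times\Lambda$ matrices. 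I would first note that a simple $N$-nilpotent $d$-tuple of contractions is the same as a positive semi-definite Gram matrix $G=(G_{\alpha,\beta})_{\alpha,\beta\in\Lambda}$ (recording $\langle\vec{b}_\beta,\vec{b}_\alpha\rangle$) with $G-S_j^{*}GS_j\succeq0$ for every $j$ --- this last condition being exactly that each $T_j$ is a contraction --- and that in this model $\|p(T)\|\le1$ becomes $p(S)^{*}Gp(S)\preceq G$, where $p(S)=\sum_{\alpha}p_\alpha S^{\alpha}$. Let $\mathcal{K}=\{G=G^{*}:G\succeq0,\ G-S_j^{*}GS_j\succeq0\text{ for all }j\}$ and $p=(p_\alpha)_{\alpha\in\Lambda}$. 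A short cyclic-subspace reduction --- given a vector $c\in\C^\Lambda$, pass to the $(S_1,\dots,S_d)$-invariant subspace it generates, whose Gram matrix again lies in $\mathcal{K}$ and for which $c$ plays the role of $\vec{b}_0$ --- shows that (1) holds if and only if $G_{0,0}\ge\langle Gp,p\rangle$ for every $G\in\mathcal{K}$, equivalently $\langle E_{00}-pp^{*},G\rangle\ge0$ for all $G\in\mathcal{K}$; since entrywise conjugation is a bijection of $\mathcal{K}$, this is in turn equivalent to $\langle P,G\rangle\ge0$ for all $G\in\mathcal{K}$, where $P_{\alpha,\beta}=\delta_{0,\alpha,\beta}-\overline{p_\alpha}p_\beta$ is exactly the matrix in (2).

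The substance of the argument is $(1)\Leftrightarrow(2)$ via cone separation. Introduce the convex cone
\[
\mathcal{C}=\Bigl\{\sum_{j=1}^{d}\bigl(A^{j}-S_jA^{j}S_j^{*}\bigr):A^{1},\dots,A^{d}\succeq0\Bigr\}
\]
of Hermitian $\Lambda\times\Lambda$ matrices; unwinding the definitions, $P\in\mathcal{C}$ is precisely condition (2), and $(2)\Rightarrow(1)$ is immediate since for $G\in\mathcal{K}$ we then have $\langle P,G\rangle=\sum_j\langle A^{j},G-S_j^{*}GS_j\rangle\ge0$, which by the first paragraph is $(1)$. For $(1)\Rightarrow(2)$ I would compute the dual cone: from $\langle A^{j}-S_jA^{j}S_j^{*},Y\rangle=\langle A^{j},Y-S_j^{*}YS_j\rangle$ and self-duality of the positive cone, $\mathcal{C}^{\circ}=\{Y=Y^{*}:Y-S_j^{*}YS_j\succeq0\text{ for all }j\}$. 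The first use of nilpotency is that such $Y$ is automatically positive semi-definite: telescoping along a single $S_j$ gives $Y=\sum_{k=0}^{N}(S_j^{*})^{k}(Y-S_j^{*}YS_j)S_j^{k}$ because $S_j^{N+1}=0$, a sum of positive matrices; hence $\mathcal{C}^{\circ}=\mathcal{K}$. The second use of nilpotency is that $\mathcal{C}$ is closed: if $\sum_j(A^{j}-S_jA^{j}S_j^{*})=0$ with all $A^{j}\succeq0$, pairing with the positive definite diagonal matrix $D$ with $(\alpha,\alpha)$ entry $t^{|\alpha|}$, for a fixed $t\in(0,1)$ --- for which each $D-S_j^{*}DS_j$ is diagonal with strictly positive entries --- forces $\langle A^{j},D-S_j^{*}DS_j\rangle=0$ and hence $A^{j}=0$; thus the linear map $(A^{j})\mapsto\sum_j(A^{j}-S_jA^{j}S_j^{*})$ has trivial kernel on the closed cone of positive tuples, so its image $\mathcal{C}$ is closed. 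The bipolar theorem now gives $P\in\mathcal{C}\iff\langle P,Y\rangle\ge0$ for all $Y\in\mathcal{C}^{\circ}=\mathcal{K}$, which by the first paragraph is $(1)$. The part that requires care --- and where the novelty lies --- is $(1)\Rightarrow(2)$: recognizing the cone $\mathcal{C}$, getting the conjugation bookkeeping right, and the two appeals to nilpotency (for $\mathcal{C}^{\circ}=\mathcal{K}$ and for closedness of $\mathcal{C}$); the separation itself is routine once these are in place.

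For $(2)\Leftrightarrow(3)$: given positive semi-definite $A^{1},\dots,A^{d}$, factor each as a Gram matrix of vectors $\{v^{j}_\alpha\}_{\alpha\in\Lambda}$ and set $A^{j}(z)=\sum_{\alpha}v^{j}_\alpha z^{\alpha}$, a vector polynomial of degree $\le N$. Expanding $(1-|z_j|^{2})|A^{j}(z)|^{2}$ and reindexing, the coefficient of $\bar z^{\alpha}z^{\beta}$ in $\sum_{j}(1-|z_j|^{2})|A^{j}(z)|^{2}$ equals $\sum_{j}(A^{j}_{\alpha,\beta}-A^{j}_{\alpha-e_j,\beta-e_j})$, which by (2) is the coefficient $\delta_{0,\alpha,\beta}-\overline{p_\alpha}p_\beta$ of $\bar z^{\alpha}z^{\beta}$ in $1-|p(z)|^{2}$ for all $\alpha,\beta\in\Lambda$; since agreement of all these coefficients is precisely agreement modulo $((z)^{N+1},(\bar{z})^{N+1})$, this gives (3). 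Conversely, the coefficient Gram matrices of degree-$\le N$ vector polynomials are positive semi-definite $\Lambda\times\Lambda$ matrices, and the same bookkeeping turns (3) back into (2).

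Finally, for $(3)\Leftrightarrow(4)$ I would invoke the $d$-variable Carath\'eodory--Fej\'er interpolation theorem of Eschmeier--Patton--Putinar \cite{EPP}: from the finite Agler-type data in (2)/(3) it produces a function in the Schur--Agler class of $\D^{d}$ whose Taylor expansion agrees with $p$ to order $N+1$, and a finite-dimensional contractive realization together with a standard normalization/limiting argument lets one take this function to be a rational inner $\phi:\D^{d}\to\D$; the decomposition \eqref{agdecomp} is then the Agler decomposition characterizing membership in the Schur--Agler class. For the reverse direction, given such a $\phi$ with kernels $K_1,\dots,K_d$, one uses that $\phi$ and $p$ share the same $N$-jet and truncates each $K_j$ (equivalently the relevant functions in an Agler realization of $\phi$) to polynomial kernels of degree $\le N$; this changes \eqref{agdecomp} only by monomials of $z$- or $\bar z$-degree at least $N+1$, which vanish modulo $((z)^{N+1},(\bar{z})^{N+1})$, producing the identity in (3) with these truncations as the $A^{j}(z)$.
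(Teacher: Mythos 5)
Your proposal is correct, and its core is the same cone--separation idea as the paper's proof of $(1)\Rightarrow(2)$, but you package and complete it differently in ways worth noting. You identify the dual cone explicitly: $\mathcal{C}^{\circ}=\mathcal{K}$, the set of Gram matrices of simple $N$-nilpotent contractive tuples (with the telescoping identity $Y=\sum_{k=0}^{N}(S_j^*)^k(Y-S_j^*YS_j)S_j^k$ playing the role of the paper's argument that the separating functional $B$ is positive semi-definite), and then invoke the bipolar theorem; the paper instead runs the separation as an explicit contradiction, constructing the nilpotent tuple from a factorization of $B$. Your proof of closedness of $\mathcal{C}$ is genuinely different from the paper's Lemma: you pair against the positive definite diagonal weight $D=\operatorname{diag}(t^{|\alpha|})$ to show the cone of positive tuples meets the kernel of $(A^j)\mapsto\sum_j(A^j-S_jA^jS_j^*)$ only at $0$, and then use the standard fact that a linear image of a closed cone with this property is closed, whereas the paper sums shifted copies of $C(k)$ to bound $\sum_j A^j(k)$ and extracts convergent subsequences; your route is shorter, the paper's is more self-contained and quantitative. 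You also close the equivalences as $(1)\Leftrightarrow(2)$, $(2)\Leftrightarrow(3)$, $(3)\Leftrightarrow(4)$, which requires your cyclic-subspace reduction (showing the single inequality $G_{0,0}\ge\langle Gp,p\rangle$ over all $G\in\mathcal{K}$ already gives the full operator inequality $\|p(T)\|\le 1$) and a direct $(4)\Rightarrow(3)$ by truncating the kernels; the paper instead closes the cycle $(4)\Rightarrow(1)$ via the hereditary functional calculus, which avoids both steps. One small caveat in your $(4)\Rightarrow(3)$: as stated, the $K_j$ in (4) are merely positive semi-definite functions on $\D^d\times\D^d$, so ``truncating to polynomial kernels of degree $\le N$'' presupposes they are holomorphic in $z$ and conjugate-holomorphic in $w$ (so that they have positive semi-definite coefficient matrices); this is harmless because an Agler decomposition of a Schur--Agler function can always be taken with such kernels (e.g., from a transfer-function realization, as your parenthetical indicates), but a sentence to that effect is needed--this is comparable to the paper's own level of detail in its appeal to the hereditary calculus.
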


\begin{remark}
We have several remarks.
\begin{itemize}
\item Here $(z)^{N+1}$ denotes the ideal in $\C[z_1,\dots, z_d]$ generated
by $\{ z^{\alpha}: |\alpha| = N+1\}$
and $((z)^{N+1}, (\bar{z})^{N+1})$ denotes the ideal in $\C[z_1,\dots, z_d, \bar{z}_1,\dots, \bar{z}_d]$
generated by $\{z^{\alpha}, \bar{z}^{\alpha}: |\alpha|=N+1\}$.
\item Note that the theorem characterizes when a polynomial $p$
is the truncation of the power series of an analytic function $f$ with Agler norm at most
one (it does \emph{not} characterize when $p$ itself has Agler
norm at most $1$).  
\item The theorem offers a conceptual improvement of 
the Carath\'eodory-Fej\'er theorem of 
Eschmeier-Patton-Putinar in the sense that the von Neumann
inequality (item 1) is the easiest condition to disprove in 
the situation where all of the equivalent conditions are false.
Specifically, to prove $p(z)$ is not the truncation (at degree $N+1$)
of the power series of an analytic function with Agler
norm at most $1$, one needs only exhibit a simple $N$-nilpotent contractive
$d$-tuple $T$ such that $\|p(T)\| >1$.
\item The paper of Ball-Li-Timotin-Trent \cite{BLTT} also contains
a formulation of the Carath\'eodory-Fej\'er interpolation theorem on 
polydisks. See Theorem 5.1 of \cite{BLTT}.
\item On the other hand, the known equivalent conditions (2),(3) are good
for demonstrating that the equivalent conditions are all true.
It is also worth pointing out that checking item 1 is a matter
of \emph{testing} a finite dimensional family of inequalities
while checking item 2 is a \emph{search} over a finite dimensional family
of positive matrices. 

\item Regarding item (4) in Theorem \ref{nilthm}, 
a \emph{rational inner function} is a rational function $\phi(z) = Q(z)/R(z)$ with
$R(z) \ne 0$ for $z\in \D^d$ and $|R(z)|= |Q(z)|$ on $\T^d$.
The formula \eqref{agdecomp} is called an Agler decomposition
and it automatically implies that $\|\phi(T)\| \leq 1$ for any
 $d$-tuple of commuting strictly contractive operators $T$.
 This follows from the hereditary functional calculus 
 (see for instance Section 8 of \cite{CW}).
The ``rational inner'' aspect in item (4) does not get used here but it is worth including 
nonetheless.  See \cite{rifsurvey} for more information about
rational inner functions.

\item 
We also should point out that there is a very strong improvement 
to this theorem in one dimension and it is the original Carath\'eodory-Fej\'er
interpolation theorem. 
For $d=1$,  we can replace condition (1) in Theorem \ref{nilthm}
with the condition that
\[
\|p(T)\| \leq 1
\]
for the single $(N+1)\times (N+1)$ matrix
defined as an operator by
\[
T \vec{e}_j = \vec{e}_{j+1} \text{ for } j=0,\dots, N-1
\]
\[
T \vec{e}_{N} = 0
\]
for an orthonormal basis $\vec{e}_0,\dots, \vec{e}_{N}$.
Indeed, $\|p(T)\| \leq 1$
is equivalent to $\|p(T)^*\| \leq 1$, which we can write as
\[
|\sum_{j} \bar{p}_j (T^j)^* \sum_{k} \bar{a}_k \vec{e}_k|^2 \leq \sum_{j=0}^{N} |a_j|^2
\]
for all scalars $a_0,\dots, a_N$.  
(Certain conjugations will help simplify later expressions.)
This is equivalent to
\[
\begin{aligned}
|\sum_{j,k} p_j a_k \vec{e}_{k-j}|^2 
&=
|\sum_{j,k} p_j a_{k+j} \vec{e}_k|^2\\
&=
\sum_{k} |\sum_{j} p_j a_{k+j}|^2\\
&=
\sum_{k} |\sum_{j} p_{j-k} a_j|^2\\
&=
\sum_k \sum_{j,i} \bar{p}_{j-k} p_{i-k} \bar{a}_j a_i\\
&=
\sum_{j,i} \left(\sum_k \bar{p}_{j-k} p_{i-k}\right) \bar{a}_j a_i \\
&\leq 
\sum_{j,i} \delta_{j,i} \bar{a}_j a_i
\end{aligned}
\]
Again we are using the convention that whenever an index is out of bounds, then
that quantity is treated as zero (i.e. $\vec{e}_j$ for $j$ negative).
The last inequality is equivalent to
\[
\left(\sum_{k} \bar{p}_{j-k} p_{i-k}\right)_{j,i} \leq I
\]
with ``$\leq$'' in the sense of  the partial order on self-adjoint matrices.
If we define $A_{j,i} = \delta_{j,i} - \sum_{k} \bar{p}_{j-k} p_{i-k}$
then $(A_{j,i})_{j,i}$ is positive semi-definite and
\[
A_{j,i} - A_{j-1,i-1}  = \delta_{0,j,k} - \bar{p}_j p_i.
\]
This is condition (2) in Theorem \ref{nilthm}
in the case $d=1$.
Note that the condition $\|p(T)\|\leq 1$
is equivalent to the following Toeplitz matrix being contractive:
\[
\begin{pmatrix} 
p_0 & p_1 & \dots & p_N \\
0 & p_0 & \ddots & \vdots \\
\vdots & \vdots &  \ddots & p_1 \\
0 & 0 & \cdots & p_0\end{pmatrix}.
\]
This is a common formulation of the
classical Carath\'eodory-Fej\'er theorem. $\diamond$
\end{itemize}

\end{remark}

Theorem \ref{colethm}, follows from 
Theorem \ref{nilthm}.  
In fact, we can prove the following more general fact.

\begin{theorem} \label{genthm}
Let $f\in \D^d \to \C$ be analytic.  We have
\[
\|f(T)\| \leq 1
\]
for all $d$-tuples $T$ of commuting strictly contractive operators on a Hilbert space
if and only if the same inequality holds for all $d$-tuples of commuting 
strictly contractive nilpotent matrices.
\end{theorem}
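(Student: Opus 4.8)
The plan is to deduce Theorem \ref{genthm} from Theorem \ref{nilthm} by a truncation argument, using the fact that, for strictly contractive tuples, only finitely many Taylor coefficients of $f$ matter "up to small error."

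First I would observe that one direction is trivial: if $\|f(T)\| \le 1$ for all commuting strictly contractive operator tuples, then in particular this holds for commuting strictly contractive nilpotent matrix tuples, which are a subclass. So the content is the converse. Assume $\|f(M)\| \le 1$ for every tuple $M$ of commuting strictly contractive nilpotent matrices; I want to conclude $\|f(T)\| \le 1$ for every commuting strictly contractive operator tuple $T$.

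Next, the key reduction: fix a commuting strictly contractive tuple $T = (T_1,\dots,T_d)$ with $\|T_j\| \le r < 1$, and fix $\varepsilon > 0$. Let $p_N(z) = \sum_{|\alpha| \le N} f_\alpha z^\alpha$ be the degree-$N$ Taylor truncation of $f$. Since $f$ is analytic on $\D^d$, its Taylor series converges absolutely and uniformly on $\overline{r\D}^d$; applying the holomorphic functional calculus (each $\|T^\alpha\| \le r^{|\alpha|}$ and $\sum_\alpha |f_\alpha| r^{|\alpha|} < \infty$) gives $\|f(T) - p_N(T)\| \to 0$ as $N\to\infty$, and similarly for any nilpotent tuple of norm $\le r$. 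So it suffices to show: for each fixed $r<1$ and each $N$, $\|p_N(T)\| \le 1 + o(1)$ as $N \to \infty$, uniformly over such $T$. Now I apply the nilpotent hypothesis to $f$ itself: I claim $p_N$ (or rather $f$) satisfies condition (1) of Theorem \ref{nilthm} with that $N$. Indeed, any simple $N$-nilpotent $d$-tuple of contractions $S$ is in particular a commuting contractive nilpotent matrix tuple, so (being able to approximate from inside the open ball, or by a limiting/rescaling argument) $\|f(S)\| \le 1$; but $f(S) = p_N(S)$ because $S^\beta = 0$ for $|\beta| > N$, so condition (1) holds for $p_N$ at level $N$. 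By Theorem \ref{nilthm}, condition (3) holds: $1 - |p_N(z)|^2 = \sum_j (1-|z_j|^2)|A^j(z)|^2$ modulo $((z)^{N+1},(\bar z)^{N+1})$, equivalently condition (4) produces a rational inner $\phi$ agreeing with $p_N$ to order $N+1$ with an Agler decomposition, hence $\|\phi(T)\| \le 1$ for strictly contractive $T$ via the hereditary functional calculus. Then $\|p_N(T)\| \le \|\phi(T)\| + \|p_N(T) - \phi(T)\| \le 1 + \|(p_N - \phi)(T)\|$, and since $p_N - \phi$ vanishes to order $N+1$, the norm $\|(p_N-\phi)(T)\|$ is controlled by $\sum_{|\beta| \ge N+1} |c_\beta| r^{|\beta|}$ where $c_\beta$ are the Taylor coefficients of $\phi$ at $0$.

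The main obstacle will be the last estimate: controlling the tail $\|(p_N - \phi)(T)\|$ uniformly in $N$. A priori $\phi$ depends on $N$, so its higher Taylor coefficients are not obviously bounded. I expect the clean way around this is to work directly with $f$ rather than going through the auxiliary $\phi$: combine $\|f(T) - p_N(T)\| \to 0$ with the fact that we may as well run the argument for the \emph{dilated} function $f_s(z) = f(sz)$ for $s$ slightly bigger than $r$, so that $f_s$ is analytic on a neighborhood of $\overline{\D}^d$ and $T/s$ is still strictly contractive; then $\|f_s(T/s)\| = \|f(T)\|$, and one applies the nilpotent hypothesis to $f_s$. Alternatively, and perhaps most cleanly, one shows that $\phi$ can be taken with $\|\phi(T)\| \le 1$ and that $f$ and $\phi$ have matching Taylor data to order $N+1$, then lets $N \to \infty$ while tracking that the family $\{\phi_N\}$ has Taylor coefficients bounded by those of a fixed analytic function on a slightly larger polydisk (using that $\|\phi_N\|_{H^\infty(\D^d)} \le 1$, hence Cauchy estimates give $|c_\beta| \le 1$, so the tail is at most $\sum_{|\beta| \ge N+1} r^{|\beta|} \to 0$). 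That last observation — that a bounded analytic function on $\D^d$ has all Taylor coefficients bounded by $1$ in modulus on each variable direction, giving a geometric tail on $\overline{r\D}^d$ — resolves the uniformity issue, and the theorem follows by letting first $N \to \infty$ and then $\varepsilon \to 0$.
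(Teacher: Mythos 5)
Your proposal is correct, and its first half coincides with the paper's proof: both reduce to Theorem \ref{nilthm} by noting that the degree-$N$ Taylor truncation $p_N$ of $f$ satisfies condition (1) (since $S^{\beta}=0$ for $|\beta|>N$ on simple $N$-nilpotent tuples, with a rescaling $S\mapsto sS$, $s\to 1^-$, to pass from strictly contractive to contractive), and both then invoke condition (4) to produce rational inner functions $\phi_N$ with Agler decompositions agreeing with $f$ to order $N+1$. Where you diverge is the concluding limit argument. The paper observes that $\phi_N\to f$ locally uniformly and then appeals to a normal-families argument showing that a local uniform limit of functions possessing Agler decompositions again possesses an Agler decomposition, after which the hereditary functional calculus is applied to $f$ itself. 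You instead fix a strictly contractive tuple $T$ with $\|T_j\|\leq r<1$ and estimate $\|f(T)\|\leq\|f(T)-p_N(T)\|+\|(p_N-\phi_N)(T)\|+\|\phi_N(T)\|$, applying the hereditary functional calculus only to each $\phi_N$ and killing the two error terms as $N\to\infty$: the first by absolute convergence of the Taylor series of $f$ on the closed polydisk of radius $r$, the second by the Cauchy estimate $|c_\beta|\leq 1$ for the coefficients of the unit-bounded $\phi_N$, so that the tail is at most $\sum_{k>N}\binom{k+d-1}{d-1}r^{k}\to 0$. Your worry that $\phi_N$ depends on $N$ is exactly the right concern, and your resolution of it is valid. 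What your route buys is that it sidesteps the assertion that local uniform limits of functions with Agler decompositions still have Agler decompositions---a fact the paper relies on but acknowledges is not easily quotable---at the price of a more computational finish; the paper's route, in exchange, yields the stronger conclusion that $f$ itself admits an Agler decomposition rather than only the von Neumann inequality for $f$.
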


Theorem \ref{colethm} does not require \emph{strict} contractions 
but by continuity we can remove this requirement when our functions
are continuous up to the boundary, as with polynomials.
Of course, in Theorem \ref{colethm} since the statement is about
polynomials, one might expect that it is not necessary to 
check $\|p(M)\|\leq 1$ for nilpotents of \emph{all} sizes
since $p$ has only finitely many coefficients.
This is an interesting question but we do not address it here;
our goal is to present and prove a precise structural relationship
between $d$-tuples of commuting nilpotents 
and Taylor polynomial interpolation (or rather Carath\'eodory-Fej\'er interpolation)
of analytic functions with Agler norm at most $1$.

\begin{remark} \label{hartzex}
After initial posting of this article, M. Hartz pointed out that
all nilpotent matrix sizes are necessary to determine the Agler norm
for the simple example one variable polynomial $p(z) = (1+z)/2$.
\end{remark}

\section{Proofs}

To begin, let us see why Theorem \ref{genthm} follows from Theorem \ref{nilthm}.

\begin{proof}[Proof of Theorem \ref{genthm} from Theorem \ref{nilthm}]
Suppose $f:\D^d \to \C$ is analytic and satisfies
\[
\|f(M)\| \leq 1
\]
for every $d$-tuple of commuting strictly contractive nilpotent
matrices $M$.
Then the degree $N$ Taylor polynomial $f_N$ of $f$ satisfies the hypotheses of
Theorem \ref{nilthm}.  So, there exists a rational inner function $\phi_N$
with an Agler decomposition as in \eqref{agdecomp}
and such that $f_N-\phi_N$ vanishes to order $N+1$.
Then, $\phi_N - f$ vanishes to order $N+1$.
This implies $\phi_N$ converges locally uniformly
to $f(z)$.  A standard normal families argument proves
that a local uniform limit $f$ of functions (in this case $\phi_N$)
with Agler decompositions will itself possess an 
Agler decomposition.  As mentioned this implies
$f(T)$ is contractive for any $d$-tuple $T$ of
commuting strictly contractive operators.
\end{proof}

Next, we prove Theorem \ref{nilthm} using Lemma \ref{Cclosed} to come later.

\begin{proof}[Proof of Theorem \ref{nilthm}]
The main novelty is the proof that
(1) implies (2).
The remaining implications are essentially known or
straightforward.
To be precise, (2) and (3)
are equivalent simply through extracting coefficients
of $z^{\alpha} \bar{z}^{\beta}$.
The implication (3) $\implies$ (4)
is a lurking isometry argument found in Eschmeier-Patton-Putinar \cite{EPP}.
The implication (4) $\implies$ (1)
follows from first observing that \eqref{agdecomp}
implies $\phi(T)$ is contractive
for any $d$-tuple $T$ of commuting strictly contractive
operators.
Then, $\phi(T) = p(T)$ for a simple $N$-nilpotent 
$d$-tuple $T$ of strict contractions since $T^{\alpha} = 0$ for $|\alpha|>N$, so that $\|p(T)\|\leq 1$.
One can then take a limit
to non-strict contractions.
We proceed to prove (1) implies (2).

Suppose condition (1) holds.  
Let
\begin{equation} \label{Ndef}
[N] = \{\alpha \in \mathbb{N}_0^d: |\alpha|\leq N\} \text{ and } n := \#[N] = \binom{d+N}{N}.
\end{equation}
Consider the following cone of self-adjoint matrices whose rows and
columns are indexed by $[N]$
\[
\mathcal{C} = 
\left\{\left(\sum_{j=1}^{d} A^{j}_{\alpha,\beta} - A^j_{\alpha-e_j, \beta-e_j}\right)_{\alpha, \beta\in [N]}:
A^1,\dots, A^d \text{ are positive semi-definite}\right\}.
\]
Again $A^j_{\alpha,\beta}$ is treated as zero whenever an
index is out of the valid range $\alpha,\beta \in [N]$. 
It suffices to prove that 
$X = (\delta_{0,\alpha,\beta} - \bar{p}_{\alpha} p_{\beta})_{\alpha,\beta\in [N]}$
belongs to $\mathcal{C}$.

By Lemma \ref{Cclosed} below, $\mathcal{C}$ is closed.
Now, if $X \notin \mathcal{C}$, 
then there exists a separating hyperplane; namely there
exists a self-adjoint matrix $B = (B_{\alpha,\beta})_{\alpha,\beta\in [N]}$
such that
for all $C \in \mathcal{C}$, $\text{tr}(CB^t) \geq 0$
and $\text{tr}(XB^t) <0$.

First, we show $B$ is positive semi-definite.
Let $a \in \C^n$ be a column vector and set $A^1 = \bar{a} a^t = (\bar{a}_{\alpha} a_{\beta})_{\alpha,\beta \in [N]}$
and $A^2=\cdots = A^d = 0$.
Then, for $C \in \mathcal{C}$ given by
\[
C_{\alpha, \beta} = A^1_{\alpha,\beta} - A^1_{\alpha-e_1,\beta-e_1} 
= \bar{a}_{\alpha} a_{\beta} - \bar{a}_{\alpha-e_1} a_{\beta-e_1}
\]
we have
\[
\begin{aligned}
0 &\leq \text{tr}(CB^t) = \sum_{\alpha,\beta \in [N]} (\bar{a}_{\alpha} a_{\beta} - \bar{a}_{\alpha-e_1,\beta-e_1}) B_{\alpha, \beta} \\
&= \sum_{\alpha,\beta} \bar{a}_\alpha a_{\beta} B_{\alpha,\beta} 
- \sum_{\alpha,\beta} \bar{a}_\alpha a_{\beta} B_{\alpha+e_1, \beta+e_1}. \\
\end{aligned}
\]
Applying the same reasoning after successively setting 
\[
A^1_{\alpha,\beta} = \bar{a}_{\alpha-e_1} a_{\beta-e_1}, 
\bar{a}_{\alpha-2e_1} a_{\beta-2e_1}, \dots
\] 
we see that for $k=0,1,2,\dots$ 
\[
0\leq \sum_{\alpha,\beta} \bar{a}_{\alpha} a_{\beta} B_{\alpha+ke_1,\beta+ke_1} 
- \sum_{\alpha,\beta} \bar{a}_\alpha a_{\beta} B_{\alpha+(k+1)e_1, \beta+(k+1)e_1}.
\]
For large enough $k$ all of these entries are zero.  
Summing over all $k$ shows that
\[
0\leq \sum_{\alpha,\beta} \bar{a}_\alpha a_{\beta} B_{\alpha,\beta}; 
\]
namely, $B$ is positive semi-definite.
Therefore, we can factor $B_{\alpha,\beta} = \vec{b}_{\alpha}^* \vec{b}_{\beta}$
for some family of vectors $\vec{b}_{\alpha} \in \C^r$ where $r$ is the rank of $B$.
By the argument above
\[
\sum_{\alpha,\beta} \bar{a}_\alpha a_{\beta} \vec{b}_{\alpha}^*\vec{b}_{\beta}
\geq 
\sum_{\alpha,\beta} \bar{a}_\alpha a_{\beta} \vec{b}_{\alpha+e_j}^*\vec{b}_{\beta+e_j}
\]
which can be written as
\[
\left| \sum_{\alpha} a_{\alpha} \vec{b}_{\alpha} \right|^2 \geq 
\left| \sum_{\alpha} a_{\alpha} \vec{b}_{\alpha+e_j} \right|^2
\]
Therefore the maps $T_j: \C^r \to \C^r$
\[
T_j \vec{b}_{\alpha} = \begin{cases} \vec{b}_{\alpha+e_j} & \text{ for } |\alpha+e_j| \leq N \\
0 & \text{ otherwise} \end{cases}
\]
extend linearly and in a well-defined way to 
form a simple $N$-nilpotent $d$-tuple of contractions.
By our assumption (item (1) in the theorem statement), 
\[
\|p(T)\| \leq 1
\]
which means that for all scalars $a_\alpha \in \C$
\[
\left| p(T)  \sum_{\alpha} a_{\alpha} \vec{b}_{\alpha} \right|^2
\leq
\left| \sum_{\alpha} a_{\alpha} \vec{b}_{\alpha} \right|^2.
\]
The left side equals
\[
\begin{aligned}
\left| \sum_{\gamma} p_{\gamma} T^{\gamma} \sum_{\alpha} a_{\alpha} \vec{b}_{\alpha}
\right|^2
&=
\left| \sum_{\gamma,\alpha} p_{\gamma} a_{\alpha} \vec{b}_{\alpha+\gamma}
\right|^2\\
&=
\left| \sum_{\gamma,\alpha} p_{\gamma-\alpha} a_{\alpha} \vec{b}_{\gamma}
\right|^2\\
&=
\sum_{\alpha,\beta} \bar{a}_\alpha a_{\beta} 
\sum_{\gamma,\delta} \bar{p}_{\gamma-\alpha} p_{\delta-\beta}
\vec{b}_{\gamma}^* \vec{b}_{\delta}.
\end{aligned}
\]
From this we see that
\[
\left( \vec{b}_{\alpha}^*\vec{b}_{\beta} - \sum_{\gamma,\delta} \bar{p}_{\gamma-\alpha} p_{\delta-\beta}
\vec{b}_{\gamma}^* \vec{b}_{\delta}\right)_{\alpha,\beta \in [N]}
\]
is positive semi-definite.
The $(0,0)$ entry is therefore non-negative:
\[
0\leq |\vec{b}_{0}|^2 - \sum_{\gamma,\delta} \bar{p}_{\gamma} p_{\delta} \vec{b}_{\gamma}^* \vec{b}_{\delta}
=|\vec{b}_{0}|^2 - \left| \sum_{\gamma} p_{\gamma} \vec{b}_{\gamma}\right|^2.
\]
But recall $X = (\delta_{0,\alpha,\beta} - \bar{p}_{\alpha} p_{\beta})_{\alpha,\beta}$
so that
\[
\text{tr}(XB^t) = 
\sum_{\alpha,\beta} (\delta_{0,\alpha,\beta} - \bar{p}_{\alpha} p_{\beta}) 
\vec{b}_{\alpha}^* \vec{b}_{\beta}
= |\vec{b}_0|^2 - \left| \sum_{\alpha} p_{\alpha} \vec{b}_{\alpha}\right|^2
\]
is both non-negative and negative---a contradiction.
Therefore, $X$ belongs to the cone $\mathcal{C}$.

\end{proof}

We used the following lemma in the previous proof.

\begin{lemma}\label{Cclosed}
Recall $[N]$ and $n$ defined in \eqref{Ndef}.
Consider the following cone of $n\times n$ 
self-adjoint matrices whose rows and
columns are indexed by the multi-indices $\alpha \in [N]$  
\[
\mathcal{C} = 
\left\{\left(\sum_{j=1}^{d} A^{j}_{\alpha,\beta} - A^j_{\alpha-e_j, \beta-e_j}\right)_{\alpha, \beta\in [N]}:
A^1,\dots, A^d \text{ are positive semi-definite}\right\}.
\]
Again, $A^j_{\alpha,\beta}$ is treated as zero whenever an
index is out of the valid range $\alpha,\beta \in [N]$.
The cone $\mathcal{C}$ is closed.
\end{lemma}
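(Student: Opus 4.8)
The plan is to show that $\mathcal{C}$ is the image of a closed convex cone under a linear map and that this particular linear map does not destroy closedness because the map is, in a suitable sense, proper along its fibers. Concretely, let $P$ denote the cone of $d$-tuples $(A^1,\dots,A^d)$ of positive semi-definite matrices indexed by $[N]$, and let $L$ be the linear map sending such a tuple to $\sum_j (A^j_{\alpha,\beta} - A^j_{\alpha-e_j,\beta-e_j})_{\alpha,\beta}$. Then $\mathcal{C} = L(P)$. The image of a closed cone under a linear map need not be closed in general, so the work is in exploiting the specific shift structure of $L$.

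The key observation I would use is that the ``downward shift'' terms $A^j_{\alpha-e_j,\beta-e_j}$ are supported on strictly smaller total degree, so there is no cancellation that allows the $A^j$ to run off to infinity while $L(A^1,\dots,A^d)$ stays bounded. Make this precise by a filtration/degree argument: grade the matrix entries by $\min(|\alpha|,|\beta|)$ (or work block by block in $|\alpha|+|\beta|$), and argue by induction on the grading that a bounded sequence in $\mathcal{C}$ forces the corresponding $A^j$ to be chosen in a bounded set. More concretely, suppose $C_k = L(A^1_k,\dots,A^d_k) \to C$. Look at the ``top'' entries, i.e.\ those $(\alpha,\beta)$ with $|\alpha|=|\beta|=N$: there the shift terms vanish, so $C_k$ restricted to those indices equals $\sum_j$ of the restrictions of $A^j_k$, each positive semi-definite; since a bounded sum of positive semi-definite matrices bounds each summand, the top-degree blocks of the $A^j_k$ are bounded. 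Peeling off one degree at a time, the $(\alpha,\beta)$-entry of $C_k$ with $\min(|\alpha|,|\beta|) = m$ equals $\sum_j A^j_{k;\alpha,\beta}$ minus shift terms already controlled at level $m-1$, so again boundedness of $C_k$ plus positivity gives boundedness of the relevant blocks of $A^j_k$. After finitely many steps all of $(A^1_k,\dots,A^d_k)$ lies in a bounded subset of $P$; pass to a convergent subsequence $A^j_k \to A^j$, which stays positive semi-definite since $P$ is closed, and then $C = L(A^1,\dots,A^d) \in \mathcal{C}$ by continuity.

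The step I expect to be the main obstacle is making the ``peeling'' argument clean: the entries of a single positive semi-definite matrix $A^j$ are not independently controlled by its diagonal-block restrictions, so one must be careful that bounding the principal submatrix of $A^j_k$ on indices of total degree $\le m$ actually bounds all entries $A^j_{k;\alpha,\beta}$ with $\min(|\alpha|,|\beta|)\le m$. This is true — for a positive semi-definite matrix, $|A_{\alpha,\beta}|^2 \le A_{\alpha,\alpha} A_{\beta,\beta}$, so controlling the diagonal controls everything, and the diagonal entries $A^j_{k;\alpha,\alpha}$ with $|\alpha| = m$ appear (up to the already-controlled shift) in the diagonal of $C_k$. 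So the right bookkeeping is: induct on $m$ from $N$ down to $0$, at each stage using $C_k$'s diagonal entries at level $m$ together with the inductively-bounded level-$(m+1)$ data to bound the diagonal of each $A^j_k$ at level $m$, hence (by the Cauchy--Schwarz inequality for positive semi-definite matrices) all of $A^j_k$. Once that is set up the rest is the standard Bolzano--Weierstrass plus closedness-of-$P$ argument, and the lemma follows.
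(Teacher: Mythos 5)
Your overall strategy---bound the preimage tuples $(A^1_k,\dots,A^d_k)$ entrywise by a degree-peeling induction, then conclude by compactness and closedness of the positive semi-definite cone---is viable, and it is genuinely different from the paper's argument (the paper sums $C(k)$ over all shifts $\gamma\in[N]$ and telescopes, getting $a_1 n I \le \sum_j A^j(k)\le a_2 n I$ in one stroke, with no induction). But as written your induction contains a real error: the base case is false and the induction runs in the wrong direction. At a position $(\alpha,\beta)$ with $|\alpha|=|\beta|=N$ the subtracted terms $A^j_{\alpha-e_j,\beta-e_j}$ are \emph{not} out of bounds: whenever $\alpha_j,\beta_j\ge 1$ the index $\alpha-e_j$ lies in $[N]$ (its total degree is $N-1$), so these terms do not vanish; they vanish only when $\alpha_j=0$ or $\beta_j=0$. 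Already for $d=1$, $N=1$ one has $C_{1,1}=A_{1,1}-A_{0,0}$, and taking $A_{1,1}=A_{0,0}=t\to\infty$ keeps $C_{1,1}=0$ bounded, so the top block of $C_k$ does not control the top block of the $A^j_k$. Moreover, in a downward induction from $m=N$ the shift terms appearing at level $m$ live at level $m-1$, which you have not yet controlled, so the inductive step as stated cannot be carried out either.

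The good news is that your idea is correct once the direction is reversed. The shift terms genuinely vanish at $\alpha=\beta=0$, giving $C_{0,0}=\sum_j A^j_{0,0}$, so boundedness of $C_k$ plus positivity bounds each $A^j_{k;0,0}$. Inducting \emph{upward} on $|\alpha|$, the diagonal identity $C_{\alpha,\alpha}=\sum_j A^j_{\alpha,\alpha}-\sum_{j:\,\alpha_j\ge 1}A^j_{\alpha-e_j,\alpha-e_j}$ expresses $\sum_j A^j_{\alpha,\alpha}$ in terms of $C_{\alpha,\alpha}$ and diagonal entries already bounded at level $|\alpha|-1$; hence all diagonal entries of all $A^j_k$ are bounded, and your Cauchy--Schwarz observation $|A_{\alpha,\beta}|^2\le A_{\alpha,\alpha}A_{\beta,\beta}$ then bounds every entry. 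From there your Bolzano--Weierstrass and closedness finish is fine. The corrected version is essentially an entrywise analogue of the paper's telescoping argument; the paper's formulation is a bit slicker because summing over all shifts telescopes globally and avoids both the induction and the entrywise bookkeeping.
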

\begin{proof}
Suppose 
\[
C(k) := \left(\sum_{j=1}^{d} A^{j}_{\alpha,\beta}(k) - A^j_{\alpha-e_j, \beta-e_j}(k) \right)_{\alpha, \beta \in [N]}
\]
is a sequence of self-adjoint matrices in $\mathcal{C}$
defined in terms of positive semi-definite matrices 
$A^1(k),\dots, A^d(k)$ and suppose $C(k) \to C$ as $k\to\infty$
where $C$ is a self-adjoint matrix.
We must show $C \in \mathcal{C}$.
Since $(C(k))_{k=0}^{\infty}$ converges, we can bound it
\[
a_1 I \leq C(k) \leq a_2 I
\]
for $a_1,a_2 \in \R$.  
Performing a shift $\gamma \in \mathbb{Z}^d$ on a self-adjoint matrix
\[
(C_{\alpha,\beta})_{\alpha,\beta \in [N]}
\mapsto (C_{\alpha- \gamma,\beta - \gamma})_{\alpha,\beta\in [N]}
\]
does not change these bounds because they amount to restricting
our matrix to a block.
(Again, note that with the convention that out-of-bounds indices give
$0$, the matrix on the right will have blocks of zeros.)
If we perform all possible shifts with $\gamma\in [N]$ to $C(k)$ and
then sum
we get
\[
a_1n I \leq \sum_{\gamma \in [N]} 
\left(\sum_{j=1}^{d} A^{j}_{\alpha-\gamma,\beta-\gamma}(k) - 
A^j_{\alpha-e_j-\gamma, \beta-e_j-\gamma}(k) \right)_{\alpha,\beta \in [N]}
\leq a_2 n I
\]
and because of telescoping sums we get
\[
a_1n I \leq \left(\sum_{j=1}^{d} A^{j}_{\alpha,\beta}(k) \right)_{\alpha,\beta \in [N]}
\leq
a_2n I.
\]
Namely,
\[
a_1 n I \leq \sum_{j=1}^{d} A^j(k) \leq a_2 n I.
\]
Therefore, the positive semi-definite matrices $A^j(k)$, $j=1,\dots, d$,
are bounded independent of $k$ and we can select subsequences
that converge to positive semi-definite matrices $A^1,\dots, A^d$.
There is no harm in replacing the entire sequence with this subsequence.
Then, necessarily as $k\to \infty$
\[
C(k) \to C = \left(\sum_{j=1}^{d} A^{j}_{\alpha,\beta} - A^j_{\alpha-e_j, \beta-e_j} \right)_{\alpha, \beta} \in \mathcal{C}.
\]
This proves that $\mathcal{C}$ is closed.

\end{proof}

\section{Agler-Pick interpolation and simultaneously diagonalizable tuples} \label{sec:comments}

As mentioned earlier, it is also known that one can test von Neumann's
inequality using $d$-tuples of simultaneously diagonalizable 
commuting contractions with joint eigenspaces each of dimension $1$.
Something more general is proven in \cite{polyhedraAMY}.
A proof can be given along the lines above again 
using a cone separation argument.  The replacement for
Theorem \ref{nilthm} would be the following theorem which
is more or less known but not explicitly stated in this form.

\begin{theorem} \label{diagthm}
Let $S \subset \D^d$ be finite and let $f:S \to \C$ 
be a function.  Set $n= \# S$.
The following are equivalent.
\begin{enumerate}
\item For every $d$-tuple $T$
of commuting, contractive, simultaneously diagonalizable matrices $T$
whose joint eigenspaces each have
dimension $1$ and $\sigma(T) \subset S$
we have
\[
\|f(T)\| \leq 1.
\]
\item There exist positive semi-definite $n \times n$ matrices $A^1,\dots, A^d$
whose rows and columns we index by $S$ 
such that for $z,w \in S$
\begin{equation} \label{finAg}
1- \overline{f(w)} f(z) = \sum_{j=1}^{d} (1-\bar{w}_j z_j) A^j_{z,w}.
\end{equation}

\item There exists a rational inner function $\phi:\D^d \to \D$ such that 
\[
\phi(z_j) = f(z_j) \text{ for } j=1,\dots, n
\]
and there exist positive semi-definite kernels $K^1,\dots, K^d$ on $\D^d\times \D^d$
such that
\[
1-\overline{\phi(w)} \phi(z) = \sum_{r=1}^{d} (1-\bar{w}_j z_j) K^j(z,w).
\]
\end{enumerate}

\end{theorem}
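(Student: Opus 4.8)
The plan is to mimic the proof of Theorem~\ref{nilthm} almost verbatim. The substantive step is (1)$\implies$(2); the equivalence (2)$\iff$(3) is the polydisk Pick interpolation theorem of Agler, and (3)$\implies$(1) comes from the hereditary functional calculus. For (1)$\implies$(2) I would introduce the convex cone of self-adjoint matrices indexed by the finite set $S$,
\[
\mathcal{C}=\Bigl\{\Bigl(\sum_{j=1}^{d}(1-\bar w_j z_j)A^j_{z,w}\Bigr)_{z,w\in S}\ :\ A^1,\dots,A^d\ \text{positive semi-definite}\Bigr\},
\]
so that it suffices to show $X:=(1-\overline{f(w)}f(z))_{z,w\in S}\in\mathcal{C}$. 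First I would check $\mathcal{C}$ is closed, which is easier than Lemma~\ref{Cclosed}: writing $D_j=\text{diag}((z_j)_{z\in S})$, a diagonal matrix with $\|D_j\|=\max_{z\in S}|z_j|<1$, one has $(1-\bar w_j z_j)A^j_{z,w}=(A^j-D_jA^jD_j^*)_{z,w}$, so $\mathcal{C}=\{\sum_j(A^j-D_jA^jD_j^*):A^j\succeq0\}$; if $C(k)=\sum_j(A^j(k)-D_jA^j(k)D_j^*)$ converges, its diagonal entries $\sum_j(1-|z_j|^2)A^j(k)_{z,z}$ are bounded, $1-|z_j|^2$ is bounded below over the finite set $S$, and positivity then bounds the $A^j(k)$, so a subsequence exhibits the limit in $\mathcal{C}$.

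Now suppose $X\notin\mathcal{C}$ and separate it from the closed cone: there is a self-adjoint $B$ with $\text{tr}(CB^t)\ge0$ for all $C\in\mathcal{C}$ and $\text{tr}(XB^t)<0$. Running $A^j$ over rank-one positive semi-definite matrices (others zero) shows $\text{tr}(A^j(B^t-D_j^*B^tD_j))\ge0$ for all $A^j\succeq0$, whence $B^t\succeq D_j^*B^tD_j$, equivalently $B\succeq D_jBD_j^*$; iterating and using $\|D_j\|<1$ forces $B\succeq0$. Factor $B$ as the Gram matrix $B_{z,w}=\vec{b}_w^*\vec{b}_z$ and set $\mathcal{H}=\text{span}\{\vec{b}_z:z\in S\}$. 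Exactly as in the proof of Theorem~\ref{nilthm}, the relations $B\succeq D_jBD_j^*$ become $|\sum_z c_z\vec{b}_z|^2\ge|\sum_z c_z z_j\vec{b}_z|^2$ for all scalars $c_z$, so $T_j\vec{b}_z=z_j\vec{b}_z$ defines a well-defined contraction on $\mathcal{H}$; the $T_j$ commute, and — discarding any $z$ with $\vec{b}_z=0$ — the remaining $\vec{b}_z$ are common eigenvectors with distinct eigenvalue tuples, hence linearly independent, and using a Lagrange interpolation polynomial on $S$ one sees each joint eigenspace is one-dimensional and $\sigma(T)\subset S$, so $T$ is admissible in condition (1). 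Since $f$ agrees on $S$ with a polynomial, $f(T)$ is well defined with $f(T)\vec{b}_z=f(z)\vec{b}_z$, and condition (1) gives $|\sum_z c_z f(z)\vec{b}_z|^2\le|\sum_z c_z\vec{b}_z|^2$; specializing to $c_z\equiv1$ yields $|\sum_z f(z)\vec{b}_z|^2\le|\sum_z\vec{b}_z|^2$, which is precisely $\text{tr}(XB^t)\ge0$ — contradicting $\text{tr}(XB^t)<0$. Hence $X\in\mathcal{C}$, i.e.\ (2) holds.

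For the remaining implications: (3)$\implies$(2) is immediate, restricting the kernels $K^j$ to $S\times S$ and using $\phi|_S=f$; (2)$\implies$(3) is the polydisk Pick interpolation theorem — condition (2) is exactly its solvability criterion — carried out by a lurking-isometry argument as in Eschmeier–Patton–Putinar \cite{EPP} (compare \cite{AMpick}, \cite{BTpick}), which can be arranged to yield a rational inner interpolant. Finally (3)$\implies$(1): the Agler decomposition forces $\|\phi(T)\|\le1$ for commuting strictly contractive $T$ via the hereditary functional calculus (Section~8 of \cite{CW}), and for a simultaneously diagonalizable contractive tuple $T$ with $\sigma(T)\subset S\subset\D^d$ the calculus still applies since the joint spectrum lies in the open polydisk; as $\phi$ and $f$ agree on $\sigma(T)$ one gets $f(T)=\phi(T)$ and so $\|f(T)\|\le1$.

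I expect the main obstacle to be the bookkeeping inside (1)$\implies$(2): checking that the tuple $T$ built from the separating functional really satisfies every hypothesis of (1) — especially that the joint eigenspaces are exactly one-dimensional and that nothing breaks when some $\vec{b}_z$ vanish — and orienting the complex conjugations in the Gram factorization of $B$ so that the $c_z\equiv1$ specialization lands on exactly the matrix $X$ appearing in (2). By contrast the closedness of $\mathcal{C}$ and the positivity of $B$ are routine, and cleaner here than in the nilpotent setting because the ``shift'' of Theorem~\ref{nilthm} is replaced by conjugation with a diagonal matrix of norm $<1$.
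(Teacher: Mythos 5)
Your proposal is correct and follows essentially the same route the paper sketches for Theorem \ref{diagthm}: a cone-separation argument, with the cone consisting of the right-hand sides of \eqref{finAg}, run in parallel with the proof of Theorem \ref{nilthm}, and with the remaining implications handled by restriction, the lurking-isometry/Pick theorem, and the hereditary calculus plus a limiting argument. The only minor divergence is that you establish closedness of the cone by conjugating with the strictly contractive diagonal matrices $D_j$ and bounding the diagonal entries (using that $1-|z_j|^2$ is bounded below on the finite set $S$), whereas the paper suggests an argument via the Schur product theorem; both work.
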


Notice that even though $f$ is simply a function
on a finite set, the hypotheses on $T$ make it possible
to define $f(T)$.   The relevant cone consists of
expressions on the right side of \eqref{finAg}.
Our argument with telescoping sums is replaced with an
argument involving the Schur product theorem (i.e.\ the entrywise product
of positive semi-definite matrices is positive semi-definite).

While Theorem \ref{nilthm} was an improvement of the Eschmeier-Patton-Putinar
extension of the Carath\'eodory-Fej\'er theorem, the above theorem
is an improvement of Agler's Pick interpolation theorem.
Again, the conceptual improvement comes from the fact that item 1 is the easiest condition
to disprove (if false) and also shows that interpolation can be checked by testing
a finite dimensional family of inequalities. Item 2 is dual; it is the easiest condition
to prove if true (i.e. simply exhibit the appropriate positive matrices)
and it involves a search over a finite dimensional family.

The companion to Theorem \ref{genthm} would then be the following.
\begin{theorem}\label{genthmdiag}
Let $f:\D^d \to \C$ be analytic. We have
\[
\|f(T)\| \leq 1
\]
for all $d$-tuples $T$ of commuting strictly contractive operators on a Hilbert space
if and only if the same inequality holds for all $d$-tuples of commuting contractive
simultaneously diagonalizable matrices.
\end{theorem}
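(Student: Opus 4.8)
The plan is to mirror exactly the derivation of Theorem \ref{genthm} from Theorem \ref{nilthm}, but now using Theorem \ref{diagthm} in place of Theorem \ref{nilthm}. One direction is trivial: if $\|f(T)\|\le 1$ for every $d$-tuple of commuting strictly contractive operators, then in particular it holds for every $d$-tuple of commuting contractive simultaneously diagonalizable matrices whose spectrum lies in $\D^d$ (these are strict contractions), and a routine limiting argument lets one pass from strictly contractive to merely contractive simultaneously diagonalizable matrices. So the content is in the reverse direction.

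For the reverse direction, suppose $\|f(T)\|\le 1$ for all $d$-tuples $T$ of commuting contractive simultaneously diagonalizable matrices. First I would fix a finite set $S\subset\D^d$ and restrict $f$ to $S$. The hypothesis guarantees that condition (1) of Theorem \ref{diagthm} holds for $f|_S$, because any $d$-tuple with the stated diagonalizability and spectrum-in-$S$ properties is in particular a commuting contractive simultaneously diagonalizable matrix tuple. Hence condition (3) of Theorem \ref{diagthm} furnishes a rational inner function $\phi_S:\D^d\to\D$ interpolating $f$ on $S$ and admitting an Agler decomposition
\[
1-\overline{\phi_S(w)}\phi_S(z) = \sum_{j=1}^d (1-\bar w_j z_j) K_S^j(z,w)
\]
with positive semi-definite kernels $K_S^j$. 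As noted in the excerpt, such an Agler decomposition forces $\|\phi_S(T)\|\le 1$ for every $d$-tuple $T$ of commuting strictly contractive operators.

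The final step is a normal families / compactness argument. I would take an increasing exhaustion $S_1\subset S_2\subset\cdots$ of $\D^d\cap(\text{say})\mathbb{Q}^{2d}$, or more simply a sequence of finite sets whose union is dense in $\D^d$, obtaining rational inner functions $\phi_{S_k}$ that agree with $f$ on $S_k$. Each $\phi_{S_k}$ maps $\D^d$ into $\D$, so the family is uniformly bounded and hence normal; passing to a locally uniformly convergent subsequence $\phi_{S_{k_\ell}}\to g$, the limit $g$ agrees with $f$ on a dense set and therefore $g=f$ by continuity of analytic functions. The same standard normal-families argument invoked in the proof of Theorem \ref{genthm} shows that a local uniform limit of functions possessing Agler decompositions again possesses an Agler decomposition; applying it to $\phi_{S_{k_\ell}}\to f$ yields an Agler decomposition for $f$, and hence $\|f(T)\|\le 1$ for every $d$-tuple $T$ of commuting strictly contractive operators, as desired. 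The main obstacle here is purely the normal-families bookkeeping—ensuring the interpolation sets are chosen so their union is dense and that the limit function is genuinely $f$—but this is routine; the substantive input (the cone-separation argument behind condition (1)$\implies$(2)) is entirely contained in Theorem \ref{diagthm}.
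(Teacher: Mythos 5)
Your proposal is correct and follows exactly the route the paper intends: the paper states Theorem \ref{genthmdiag} as the ``companion'' to Theorem \ref{genthm}, to be derived from Theorem \ref{diagthm} just as Theorem \ref{genthm} is derived from Theorem \ref{nilthm}, which is precisely your argument (finite interpolation sets exhausting a dense subset, rational inner interpolants with Agler decompositions, and the normal-families limit). The only detail worth keeping explicit is the scaling $T\mapsto rT$, $r\to 1^-$, in the easy direction, so that $f(T)$ makes sense and the inequality passes from strictly contractive to contractive simultaneously diagonalizable tuples with joint spectrum in $\D^d$.
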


Combining Theorems \ref{genthm} and \ref{genthmdiag} we can state the following corollary.

\begin{corollary}\label{gencor}
Let $f: \D^d \to \C$ be analytic and $c>0$. 
The following are equivalent:
\begin{itemize}
\item $\|f\|_A > c$ 
\item There exists a $d$-tuple $T$ of commuting contractive simultaneously diagonalizable 
matrices such that $\|f(T)\| > c$.
\item 
There exists a $d$-tuple $M$ of commuting contractive nilpotent matrices such that $\|f(M)\| >c$.
\end{itemize}
\end{corollary}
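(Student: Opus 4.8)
The plan is to obtain Corollary~\ref{gencor} as a direct consequence of Theorems~\ref{genthm} and~\ref{genthmdiag}, together with the positive homogeneity of the Agler norm and a short scaling argument relating contractive and strictly contractive nilpotent tuples. Throughout, $\|f\|_A$ is read as $\sup\{\|f(T)\| : T \text{ a commuting strictly contractive operator tuple}\}$, which is the quantity to which the cited theorems apply. First I would record that for $c>0$ one has $(f/c)(T) = c^{-1}f(T)$ for every such $T$, hence $\|f/c\|_A = c^{-1}\|f\|_A$; in particular $\|f\|_A > c$ if and only if $f/c$ violates the (equivalent) hypotheses appearing in Theorems~\ref{genthm} and~\ref{genthmdiag}.

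For the equivalence of the first two bullets I would apply the contrapositive of Theorem~\ref{genthmdiag} to $g := f/c$: the statement ``$\|g(T)\|\le 1$ for all commuting strictly contractive operator tuples'' fails precisely when ``$\|g(T)\|\le 1$ for all commuting contractive simultaneously diagonalizable matrix tuples'' fails, i.e.\ precisely when there is a commuting contractive simultaneously diagonalizable matrix tuple $T$ with $\|g(T)\| > 1$, equivalently $\|f(T)\| > c$. Combined with the homogeneity identity of the previous paragraph, this is exactly the equivalence of the first two bullets. The analogous application of the contrapositive of Theorem~\ref{genthm} to $g = f/c$ shows that $\|f\|_A > c$ holds if and only if there is a commuting \emph{strictly} contractive nilpotent matrix tuple $M$ with $\|f(M)\| > c$; since a strictly contractive tuple is in particular contractive, this already gives that the first bullet implies the third.

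The only step requiring a genuine (if brief) argument is the converse implication ``third bullet $\Rightarrow$ first bullet'', which is the passage from a contractive nilpotent tuple to a strictly contractive one. Given a commuting contractive nilpotent matrix tuple $M = (M_1,\dots,M_d)$, pick $N$ with $M^\alpha = 0$ whenever $|\alpha| > N$; then $f(M) = p_N(M)$ and $f(rM) = p_N(rM)$ for $0 < r < 1$, where $p_N$ is the degree-$N$ Taylor polynomial of $f$ at the origin. Since $p_N(rM) \to p_N(M)$ in norm as $r \to 1^-$, we get $f(rM) \to f(M)$; so if $\|f(M)\| > c$ then $\|f(rM)\| > c$ for $r$ close enough to $1$, and $rM$ is a commuting strictly contractive nilpotent matrix tuple. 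By the previous paragraph this yields $\|f\|_A > c$, closing the cycle of implications. I do not expect any serious obstacle: the content is entirely in Theorems~\ref{genthm} and~\ref{genthmdiag}, and the corollary is a bookkeeping argument whose only subtle point is making sure the definition of $\|f\|_A$ and the ``strictly contractive'' hypotheses in those theorems are matched up correctly.
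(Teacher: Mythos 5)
Your proposal is correct and follows the paper's own route: the paper simply states that the corollary follows by combining Theorems \ref{genthm} and \ref{genthmdiag}, and your argument is exactly that combination, with the routine details (homogeneity via $f/c$ and the $rM\to M$ scaling, valid because nilpotency makes $f(M)$ a polynomial in the tuple) filled in correctly.
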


\begin{remark}
The paper Lotto-Steger \cite{Lotto}
exhibits a polynomial $p$ (actually the usual Kaijser-Varopoulos polynomial \cite{varo})
 in three variables and a commuting contractive $3$-tuple of $5\times 5$ matrices $T$ that are simultaneously
 diagonalizable such that $\|p(T)\| > \sup_{\D^3} |p|$.
 The example is constructed by perturbing the nilpotent matrices from
 the original Kaijser-Varopoulos example.  
 This example (at the time) was designed to show that the von Neumann 
 inequality could fail with simultaneously diagonalizable
 matrices.  However, Corollary \ref{gencor} shows that an explicit
 perturbation is not necessary.  
 Indeed, Theorem \ref{genthm} and
 Theorem \ref{genthmdiag}
 prove that for a polynomial $p \in \C[z_1,\dots, z_d]$
 there exists a $d$-tuple of commuting contractive nilpotent matrices $M$
 such that $\|p(M)\| > \sup_{\D^d} |p|$ if and only if
 there exists a $d$-tuple of commuting contractive simultaneously
 diagonalizable matrices $T$ such that
 $\|p(T)\| > \sup_{\D^d} |p|$.
 However, the abstract nature of the approach taken here
 would make it difficult to explicitly go between $M$ and $T$.
 
 The paper of Holbrook-Omladič \cite{HO} is also relevant here---they 
 study when commuting $d$-tuples of matrices can
 be approximated by simultaneously diagonalizable tuples 
 with some motivation coming from von Neumann inequalities.
 A ``von Neumann inequality'' can be broadly construed as taking a family
 of functions $f$ and a family of $d$-tuples of commuting operators $T$
 and wanting to compute the supremum of $\|f(T)\|$.
  Approximation arguments are relevant if one wants to construct
  ``minimal'' or simple counterexamples to von Neumann inequalities (broadly construed)
  as far as the operators are concerned, but if we are simply interested in
 ``minimal'' counterexamples as far as the function is concerned then
 the operators can either be taken to be simultaneously diagonalizable or
 nilpotent matrices. 
  $\diamond$
\end{remark}

\section{Acknowledgments}
Thanks to Brian Cole for sharing his result in the analysis
seminar at Washington University in St. Louis in spring 2024 and thanks
as well for useful conversations.  Thanks to John M$^{\text{c}}$Carthy
for pointing out that \cite{polyhedraAMY} contains the fact
that the Agler norm can be computed with 
simultaneously diagonalizable matrices.
Thanks very much to Michael Hartz for comments on this paper especially
regarding his proof of Theorem \ref{colethm} and Remark \ref{hartzex}.
Thank you to David Sherman for pointing on the reference \cite{Sherman}.
Finally, thank you to the anonymous referee for a thoughtful report and for 
pointing our several typos.

\begin{bibdiv}
\begin{biblist}

\bib{Agler}{article}{
   author={Agler, Jim},
   title={On the representation of certain holomorphic functions defined on
   a polydisc},
   conference={
      title={Topics in operator theory: Ernst D. Hellinger memorial volume},
   },
   book={
      series={Oper. Theory Adv. Appl.},
      volume={48},
      publisher={Birkh\"auser, Basel},
   },
   isbn={3-7643-2532-1},
   date={1990},
   pages={47--66},
   review={\MR{1207393}},
}

\bib{AMpick}{article}{
   author={Agler, Jim},
   author={McCarthy, John E.},
   title={Nevanlinna-Pick interpolation on the bidisk},
   journal={J. Reine Angew. Math.},
   volume={506},
   date={1999},
   pages={191--204},
   issn={0075-4102},
   review={\MR{1665697}},
   doi={10.1515/crll.1999.004},
}

\bib{AMbook}{book}{
   author={Agler, Jim},
   author={McCarthy, John E.},
   title={Pick interpolation and Hilbert function spaces},
   series={Graduate Studies in Mathematics},
   volume={44},
   publisher={American Mathematical Society, Providence, RI},
   date={2002},
   pages={xx+308},
   isbn={0-8218-2898-3},
   review={\MR{1882259}},
   doi={10.1090/gsm/044},
}

\bib{polyhedraAMY}{article}{
   author={Agler, Jim},
   author={McCarthy, John E.},
   author={Young, N. J.},
   title={On the representation of holomorphic functions on polyhedra},
   journal={Michigan Math. J.},
   volume={62},
   date={2013},
   number={4},
   pages={675--689},
   issn={0026-2285},
   review={\MR{3160536}},
   doi={10.1307/mmj/1387226159},
}

\bib{Ando}{article}{
   author={And\^o, T.},
   title={On a pair of commutative contractions},
   journal={Acta Sci. Math. (Szeged)},
   volume={24},
   date={1963},
   pages={88--90},
   issn={0001-6969},
   review={\MR{0155193}},
}

\bib{BTpick}{article}{
   author={Ball, Joseph A.},
   author={Trent, Tavan T.},
   title={Unitary colligations, reproducing kernel Hilbert spaces, and
   Nevanlinna-Pick interpolation in several variables},
   journal={J. Funct. Anal.},
   volume={157},
   date={1998},
   number={1},
   pages={1--61},
   issn={0022-1236},
   review={\MR{1637941}},
   doi={10.1006/jfan.1998.3278},
}

\bib{BLTT}{article}{
   author={Ball, J. A.},
   author={Li, W. S.},
   author={Timotin, D.},
   author={Trent, T. T.},
   title={A commutant lifting theorem on the polydisc},
   journal={Indiana Univ. Math. J.},
   volume={48},
   date={1999},
   number={2},
   pages={653--675},
   issn={0022-2518},
   review={\MR{1722812}},
   doi={10.1512/iumj.1999.48.1708},
}

\bib{CW}{article}{
   author={Cole, Brian J.},
   author={Wermer, John},
   title={Ando's theorem and sums of squares},
   journal={Indiana Univ. Math. J.},
   volume={48},
   date={1999},
   number={3},
   pages={767--791},
   issn={0022-2518},
   review={\MR{1736979}},
   doi={10.1512/iumj.1999.48.1716},
}

\bib{Sherman}{article}{
   author={Courtney, Kristin},
   author={Sherman, David},
   title={The universal $C^*$-algebra of a contraction},
   journal={J. Operator Theory},
   volume={84},
   date={2020},
   number={1},
   pages={153--184},
   issn={0379-4024},
   review={\MR{4157358}},
   doi={10.7900/jot},
}

\bib{crabb}{article}{
   author={Crabb, M. J.},
   author={Davie, A. M.},
   title={von Neumann's inequality for Hilbert space operators},
   journal={Bull. London Math. Soc.},
   volume={7},
   date={1975},
   pages={49--50},
   issn={0024-6093},
   review={\MR{0365179}},
   doi={10.1112/blms/7.1.49},
}

\bib{dixon}{article}{
   author={Dixon, P. G.},
   title={The von Neumann inequality for polynomials of degree greater than
   two},
   journal={J. London Math. Soc. (2)},
   volume={14},
   date={1976},
   number={2},
   pages={369--375},
   issn={0024-6107},
   review={\MR{0428075}},
   doi={10.1112/jlms/s2-14.2.369},
}

\bib{EPP}{article}{
   author={Eschmeier, J\"{o}rg},
   author={Patton, Linda},
   author={Putinar, Mihai},
   title={Carath\'{e}odory-Fej\'{e}r interpolation on polydisks},
   journal={Math. Res. Lett.},
   volume={7},
   date={2000},
   number={1},
   pages={25--34},
   issn={1073-2780},
   review={\MR{1748285}},
   doi={10.4310/MRL.2000.v7.n1.a3},
}

\bib{holbrook}{article}{
   author={Holbrook, John A.},
   title={Schur norms and the multivariate von Neumann inequality},
   conference={
      title={Recent advances in operator theory and related topics},
      address={Szeged},
      date={1999},
   },
   book={
      series={Oper. Theory Adv. Appl.},
      volume={127},
      publisher={Birkh\"auser, Basel},
   },
   isbn={3-7643-6607-9},
   date={2001},
   pages={375--386},
   review={\MR{1902811}},
}

\bib{HO}{article}{
   author={Holbrook, John},
   author={Omladi\v c, Matja\v z},
   title={Approximating commuting operators},
   journal={Linear Algebra Appl.},
   volume={327},
   date={2001},
   number={1-3},
   pages={131--149},
   issn={0024-3795},
   review={\MR{1823346}},
   doi={10.1016/S0024-3795(00)00286-X},
}

\bib{rifsurvey}{article}{
      title={Rational inner functions on the polydisk -- a survey}, 
      author={Greg Knese},
      year={2024},
      eprint={2409.14604},
}

\bib{Lotto}{article}{
   author={Lotto, B. A.},
   author={Steger, T.},
   title={von Neumann's inequality for commuting, diagonalizable
   contractions. II},
   journal={Proc. Amer. Math. Soc.},
   volume={120},
   date={1994},
   number={3},
   pages={897--901},
   issn={0002-9939},
   review={\MR{1169882}},
   doi={10.2307/2160484},
}

\bib{varo}{article}{
   author={Varopoulos, N. Th.},
   title={On an inequality of von Neumann and an application of the metric
   theory of tensor products to operators theory},
   journal={J. Functional Analysis},
   volume={16},
   date={1974},
   pages={83--100},
   issn={0022-1236},
   review={\MR{0355642}},
   doi={10.1016/0022-1236(74)90071-8},
}

\bib{vN}{article}{
   author={von Neumann, Johann},
   title={Eine Spektraltheorie f\"ur allgemeine Operatoren eines unit\"aren
   Raumes},
   language={German},
   journal={Math. Nachr.},
   volume={4},
   date={1951},
   pages={258--281},
   issn={0025-584X},
   review={\MR{0043386}},
   doi={10.1002/mana.3210040124},
}

\end{biblist}
\end{bibdiv}

\end{document}